\newtheorem{theorem}{Theorem}[section]
\newtheorem{corollary}[theorem]{Corollary}
\newtheorem{lemma}[theorem]{Lemma}
\theoremstyle{definition}
\newtheorem{definition}[theorem]{Definition}
\newtheorem{example}[theorem]{Example}
\begin{document}
\title{Solid weak BCC-algebras}
\author{Wieslaw A. Dudek}
\address{Institute of Mathematics and Computer Science,
Wroc{\l}aw University of Techno\-logy, Wybrze\.ze Wyspia\'nskiego
27, 50-370 Wroc{\l}aw, Poland} \email{dudek@im.pwr.wroc.pl}
\thanks{{\scriptsize
\hskip -0.4 true cm MSC(2010): 03G25, 06F35
\newline Keywords: BCC-algebra, weak BCC-algebra,
BZ-algebra, branch.}}

\date{}

\begin{abstract}
We characterize weak BCC-algebras in which the identity
$(xy)z=(xz)y$ is satisfied only in the case when elements $x,y$
belong to the same branch.
\end{abstract}

\maketitle

\section{Introduction}

A goal that artificial intelligence has been trying to perfect for
decades is to realistically simulate decision making process of
humans when faced with both certain and uncertain types of
information. Logic behind those decisions is dominant in proof
theory. Such logic has two important roles both in mathematics and
computers -- it is a technique responsible for foundations of any
system as well as a tool for applications. Additionally to
classical logic there are various non-classical logic systems
built on top of it (e.g. many-valued logic, fuzzy logic, etc.)
that handle information with various aspects of uncertainty (cf.
\cite{Zad}) -- fuzziness, randomness and so on. Incomparability is
one important example of such uncertainty that may be easily
understood based on one's experience. Computer science relies
heavily on non-classical logic to deal with fuzzy and uncertain
information.

Because of advancements made in technology and artificial
intelligence, the study of $t$-norm-based logic systems and the
corresponding pseudo-logic systems has become a big focus in the
field of logic. BCK and BCI-algebras have been inspired by some
implicational logic and that can be event noticed when looking at
the similarity of names. Examples may be BCK-algebras and a BCK
positive logic or BCI-algebras and a BCI positive logic.
$t$-norm-bases algebraic investigations were indeed first to the
corresponding algebraic investigations and in the case of
pseudo-logic systems, algebraic development was first to the
corresponding logical development (see for example \cite{Ior}).
However, the link between such algebras and their respective
logics may be even much stronger. It is possible to define
translation procedures (and sometimes even their inverse
procedures) for all well formed formulas and all theorems of a
given logic into terms and theorems of the corresponding algebra
\cite{Ras}. Even when the full inverse translation procedure is
impossible, many researchers find it interesting and useful for
corresponding logics to study the algebras motivated by known
logics (cf. \cite{Bun}).

BCC-algebras (introduced by Y. Komori \cite{Ko'84}) are
generalizations of BCK-algebras, weak BCC-algebras are
generalizations of BCI-algebras. In view of strong connections
with a BIK$^+$-logic, (weak) BCC-algebras are also called (weak)
BIK$^+$-algebras (cf. \cite{Zh'07}). From results proved in
\cite{Du'90}, \cite{Du'92} and \cite{Mun} it follows that
MV-algebras are equivalent to the bounded commutative weak
BCC-algebras BCK-algebras, and so on. Hence, most of the algebras
related to the $t$-norm based logic, such as MTL-algebras,
BL-algebras, lattice implication algebras, Hilbert algebras and
Boolean algebras, are special cases of weak BCC-algebras. This
shows that BCC-algebras are considerably general structures.

Equivalence relations play an important role in the investigation
of such algebras and the corresponding logic. Equivalence
relations are determined by subsets called ideals. A special case
of ideals are branches induced by some elements. It is well known
that some properties which are satisfied in the ideals can be
extended to the whole algebra. Thus, it is sufficient to examine
these properties only on those ideals (branches).

The identity $(xy)z=(xz)y$ plays a crucial role in the study of
such algebras. It holds in all BCK-algebras and in some
generalizations of BCK-algebras, but not in BCC-algebras.
BCC-algebras satisfying this identity are BCK-algebras (cf.
\cite{Du'90} or \cite{Du'92}). The proofs of many properties of
BCK, BCI and BCC-algebra are required to meet the equality of all
elements of the algebra, but there are examples of algebras having
the property and not satisfying this identity. Therefore, it is
important to examine for which of the properties of weak
BCC-algebras it is sufficient that the equation $(xy)z=(xz)y$ is
satisfied by elements belonging to the same branch. This will
enable faster checking whether a given algebra has this property.

Below we begin the study of weak BCC-algebras in which the above
identity is satisfied only in the case when elements $x$ and $y$
belong to the same branch.

\section{Preliminaries}

\begin{definition} A {\it weak BCC-algebra} is a system $(X,*,0)$ of type $(2,0)$
satisfying the following axioms:
\begin{enumerate}
\item[$(i)$] \ $((x*y)*(z*y))*(x*z)= 0$,
\item[$(ii)$] \ $x*x = 0$,
\item[$(iii)$] \ $x*0 = x$,
\item[$(iv)$] \ $x*y = y*x = 0\Longrightarrow x = y$.
\end{enumerate}
\end{definition}

By many mathematicians, especially from China and Korea, weak
BCC-algebras are called {\it BZ-algebras} (cf. \cite{Zh'95},
\cite{ZWD} or \cite{DZW}), but we keep the first name because it
coincides with the general concept of names presented in the book
\cite{Ior} for algebras of logic.

A weak BCC-algebra satisfying the identity
\begin{enumerate}
\item[$(v)$] \ $0*x = 0$
\end{enumerate}
is called a {\it BCC-algebra} or {\it BIK$^+$-algebra}.

A weak BCC-algebra satisfying the identity
\begin{enumerate}
\item[$(vi)$] \ $(x*y)*z = (x*z)*y$
\end{enumerate}
is called a BCI-algebra. A weak BCC-algebra which is neither a
BCI-algebra or a BCC-algebra is called {\it proper}. Proper weak
BCC-algebras have at least four elements (see \cite{Du'96}). Note
that there are only two non-isomorphic weak BCC-algebras with four
elements:

\[
\begin{array}{c|cccc} *&0&1&2&3\\ \hline
0&0&0&2&2\\
1&1&0&2&2\\
2&2&2&0&0\\
3&3&3&1&0\\
\end{array}\\[3mm]
\hspace*{24mm}
\begin{array}{c|cccc} *&0&1&2&3\\ \hline
0&0&0&2&2\\
1&1&0&3&3\\
2&2&2&0&0\\
3&3&3&1&0\\
\end{array}
\]
\hspace*{32mm} Table\;1.  \hspace*{35mm} Table\;2.\\

They are proper, because in both cases $(3*2)*1 \neq (3*1)*2$.

A list of all non-isomorphic proper weak BCC-algebras having $5$
elements is presented in \cite{BZ-alg}. From results proved in
\cite{Du'96} and \cite{BZ-alg} we can deduce that for every
$n\geqslant 5$ one can find at least $22$ non-isomorphic proper
weak BCC-algebras having $n$ elements.

Any weak BCC-algebra can be considered as a partially ordered set
with the partial order $\leqslant$ defined by
\begin{equation}\label{1}
x\leqslant y \Longleftrightarrow x*y = 0.
\end{equation}

From $(i)$ it follows that in each weak BCC-algebra the
implications
\begin{equation} \label{2}
x \leqslant y \Longrightarrow x*z \leqslant y*z
\end{equation}
\begin{equation} \label{3}
x \leqslant y \Longrightarrow z*y \leqslant z*x
\end{equation}
are satisfied for all $x,y,z \in X$.

The set of all minimal (with respect to $\leqslant$) elements of
$X$ will be denoted by $I(X)$. This is characterized in \cite{DZW}
and \cite{BB}.

An important role in our investigations a will be played by two
subsets of $X$, namely:
\[
A(b)=\{x\in X\,|\,x\leqslant b\} \ \ \ \ \ {\rm and } \ \ \ \ \
B(a)=\{x\in X\,|\,a\leqslant x\},
\]
where $a\in I(X)$. The first subset is called the {\it initial
part} determined by $b\in X$, the second -- the {\it branch}
initiated by $a$.

A BCC-algebra has only one branch. Weak BCC-algebras defined by
Tables 1 and 2 have two minimal elements and two branches. Namely:
$I(X)=\{0,2\}$, $B(0)=\{0,1\}$, $B(2)=\{2,3\}$.

Branches initiated by different elements are disjoint (cf.
\cite{DZW}). A weak BCC-algebra is a set-theoretic union of
branches. Comparable elements are in the same branch, but there
are weak BCC-algebras containing branches in which not all
elements are comparable. Moreover, as it is proved in \cite{DZW},
elements $x$ and $y$ are in the same branch if and only if $x*y\in
B(0)$.

In theory of BCI-algebras an important role is played by
BCI-algebras satisfying some additional identities since such
BCI-algebras have properties similar to properties of lattices.
For example, BCI-algebras satisfying the identity
\begin{equation}\label{com}
x*(x*y)=y*(y*x),
\end{equation}
have properties similar to properties of lower semilattices (cf.
\cite{Huang}). A BCI-algebra in which
\begin{equation}\label{posimp}
(x*y)*y=x*y
\end{equation}
holds for all its elements has only one branch, so it is a
BCK-algebra. The class of BCK-algebras satisfying \eqref{posimp}
is a variety (cf. \cite{book}). Bounded BCK-algebras satisfying
the identity
\begin{equation}\label{imp}
x*(y*x)=x
\end{equation}
are distributive lattices (cf. \cite{book}). For BCI-algebras this
is not true. In connection with this fact, M. A. Chaudhry
initiated in \cite{Ch'92}, \cite{Ch'01} and \cite{Ch'02} the
investigation of BCI-algebras in which the above identities are
satisfied only by elements belonging to the same branch. The
obtained results are similar, but not identical, to results proved
earlier for BCI-algebras. Unfortunately, these results can not be
transferred to weak BCC-algebras because in the proofs of these
results the identity $(x*y)*z=(x*z)*y$ is used. But this identity
holds in a weak BCC-algebra only in the case when a weak
BCC-algebra is a BCI-algebra.

However, there are proper weak BCC-algebras in which the above
identities are satisfied by elements belonging to the same branch.

Keeping the terminology used in the theory of BCI/BCK-algebras we
will say that a weak BCC-algebra is {\it commutative} if it
satisfies \eqref{com}, {\it positive implicative} -- if it
satisfies \eqref{posimp}, and {\it implicative} -- if it satisfies
\eqref{imp}. If these equations are satisfied by elements
belonging to the same branch, then we will say that the
corresponding weak BCC-algebra is {\it branchwise commutative}
({\it branchwise positive implicative} or {\it branchwise
implicative}, respectively).

\begin{example}\label{Ex-1}\rm The following proper BCC-algebra
(calculated in \cite{Du'92})
\begin{center}$
\begin{array}{c|cccc} *&0&1&2&3\\ \hline\rule{0pt}{11pt}
0&0&0&0&0\\
1&1&0&0&1\\
2&2&2&0&1\\
3&3&3&3&0
\end{array}$\\[3mm]

Table\;3.
\end{center}

\noindent is an example of a BCC-algebra which is positive
implicative but it is neither commutative nor implicative.
\hfill$\Box{}$
\end{example}

\begin{example}\label{Ex-2}\rm The weak BCC-algebra defined by the
following table:

\begin{center}$
\begin{array}{c|cccccc} *&0&1&2&3&4&5\\ \hline\rule{0pt}{11pt}
0&0&0&4&4&2&2\\
1&1&0&4&4&2&2\\
2&2&2&0&0&4&4\\
3&3&2&1&0&4&4\\
4&4&4&2&2&0&0\\
5&5&4&3&3&1&0
\end{array}$\\[3mm]

Table\;4.
\end{center}

\noindent is proper because $(5*3)*2\ne (5*2)*3$. It has three
branches: $B(0)$, $B(2)$, $B(4)$. Direct computation shows that it
is branchwise commutative and branchwise implicative. Since
$1*(1*2)\ne 2*(2*1)$ and $1*(2*1)=4$ it is neither commutative nor
implicative. \hfill$\Box{}$
\end{example}

\section{Solid weak BCC-algebras}

A crucial role in the theory of BCI-algebras is played by the
equation
\begin{equation}\label{e-sol}
 (x*y)*z=(x*z)*y,
\end{equation}
which is valid for all $x,y,z\in X$. It is used in the proofs of
many basic facts. But in many proofs it is applied only to
elements belonging to some subsets. This motivates us to start
studying the BCC-algebras in which \eqref{e-sol} is fulfilled in
the case where at least two elements in this equation belong to
the same branch.

\begin{definition}
A weak BCC-algebra $X$ is called {\em left solid} (shortly: {\it
solid}) if the above equation is valid for all $x$, $y$ belonging
to the same branch and arbitrary $z\in X$. If it is valid for $y$,
$z$ belonging to the same branch and arbitrary $x\in X$, then we
say that a weak BCC-algebra $X$ is {\it right solid}. A left and
right solid weak BCC-algebra is called {\it supersolid}.
\end{definition}

Obviously, BCI-algebras and BCK-algebras are supersolid weak
BCC-alge\-bras. A solid BCC-algebra is a BCK-algebra since it has
only one branch. There are solid weak BCC-algebras which are not
BCI-algebras. For example, proper weak BCC-algebra defined by
Tables 1 and 2 are not solid, but the first is right solid, the
second is not right solid. A weak BCC-algebra defined by Table 4
is solid, but it is not right solid. It is the smallest solid weak
BCC-algebra because proper weak BCC-algebras with $5$ elements
(calculated in \cite{BZ-alg}) are not solid.

\begin{lemma}\label{L32}
In a solid weak BCC-algebra $X$ we have
\[
x*(x*y)\leqslant y
\]
for all $x,y$ from the same branch.
\end{lemma}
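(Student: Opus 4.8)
The plan is to read the desired inequality $x*(x*y)\le y$, i.e.\ $(x*(x*y))*y=0$, as a single instance of the solidity identity \eqref{e-sol}, chosen so that the branch hypothesis falls exactly on the pair $(x,y)$ given in the statement.

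First I would recall that, by the definition \eqref{1} of the order, establishing $x*(x*y)\le y$ is the same as establishing $(x*(x*y))*y=0$. The tempting move is to view $(x*(x*y))*y$ as a left-hand side $(x*u)*y$ of \eqref{e-sol} and to swap $u:=x*y$ past $y$; but this reading would require $x$ and $x*y$ to lie in a common branch, which in general fails (e.g.\ in Table 4 one has $3,2\in B(2)$ while $3*2=1\in B(0)$, so $3$ and $3*2$ sit in different branches). Recognizing that this is the wrong pairing is really the only subtlety of the argument.

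The correct step is to apply \eqref{e-sol} in the form $(x*y)*z=(x*z)*y$ with the outer factor taken to be $z:=x*y$. Since $x$ and $y$ are assumed to belong to the same branch and $z$ is allowed to be arbitrary, solidity is available and gives $(x*y)*(x*y)=(x*(x*y))*y$. The left-hand side vanishes by axiom $(ii)$, so $(x*(x*y))*y=0$, which is exactly the assertion.

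Hence I expect no genuine computational obstacle once the substitution $z=x*y$ is spotted; the entire content of the proof is the observation that this choice routes the same-branch requirement of \eqref{e-sol} onto the pair $(x,y)$ — precisely the hypothesis of the lemma — while the inner occurrence of $x*y$ is harmlessly absorbed into the unrestricted third slot $z$.
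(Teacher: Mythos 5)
Your proof is correct and is essentially identical to the paper's: both apply the solidity identity $(x*y)*z=(x*z)*y$ with $z:=x*y$ (legitimate since the same-branch hypothesis falls on $x,y$ and $z$ is unrestricted) to get $(x*(x*y))*y=(x*y)*(x*y)=0$. The extra remark about the wrong pairing is sound but not needed.
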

\begin{proof}
Indeed, according to the definition, for $x,y$ from the same
branch we have
\[
(x*(x*y))*y=(x*y)*(x*y)=0,
\]
which proves $x*(x*y)\leqslant y$.
\end{proof}
\begin{corollary}\label{C33}
In a solid weak BCC-algebra $x,y\in B(a)$ implies
$x*(x*y),\,y*(y*x)\in B(a)$.
\end{corollary}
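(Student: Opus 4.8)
The plan is to obtain Corollary~\ref{C33} as an immediate consequence of Lemma~\ref{L32} combined with the structural facts about branches recalled in the Preliminaries. The guiding observation is that Lemma~\ref{L32} already places $x*(x*y)$ below an element that is known to lie in $B(a)$, and that in any weak BCC-algebra comparable elements must lie in the same branch. So the whole task reduces to transporting branch membership along a comparability.

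First I would apply Lemma~\ref{L32}. Since $x,y\in B(a)$, they belong to the same branch, so the hypothesis of the lemma is satisfied and it yields $x*(x*y)\leqslant y$. Interchanging the roles of $x$ and $y$—which is legitimate because the symmetric hypothesis $y,x\in B(a)$ holds just as well—gives $y*(y*x)\leqslant x$.

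Next I would invoke the fact, recorded in Section~2, that comparable elements of a weak BCC-algebra lie in the same branch. Applied to $x*(x*y)\leqslant y$, this shows that $x*(x*y)$ lies in the same branch as $y$; since $y\in B(a)$ and distinct branches are disjoint (so that each element belongs to exactly one branch), we conclude $x*(x*y)\in B(a)$. The identical argument applied to $y*(y*x)\leqslant x$ gives $y*(y*x)\in B(a)$, which finishes the proof.

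The argument is essentially mechanical and presents no substantial obstacle; the only points requiring care are that the ``same branch'' hypothesis of Lemma~\ref{L32} genuinely holds in both the original and the interchanged instance, and that the two structural inputs—``comparable elements lie in the same branch'' and ``distinct branches are disjoint''—are precisely what is needed to transfer membership from $y$ (respectively $x$) to $x*(x*y)$ (respectively $y*(y*x)$). It is worth noting that a naive direct attempt to deduce $a\leqslant x*(x*y)$ from $a\leqslant y$ and $x*(x*y)\leqslant y$ fails, since these two inequalities both point ``up'' to $y$ and hence do not compose by transitivity; it is exactly the comparability/branch machinery that circumvents this difficulty, which is why I would route the proof through it rather than through a direct manipulation of $\leqslant$.
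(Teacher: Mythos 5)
Your proof is correct and follows exactly the paper's (very terse) argument: apply Lemma~\ref{L32} to get $x*(x*y)\leqslant y$ and $y*(y*x)\leqslant x$, then use the fact that comparable elements lie in the same branch. Your version merely spells out the details (the symmetric instance and the disjointness of branches) that the paper leaves implicit.
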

\begin{proof}
Indeed, comparable elements are in the same branch (cf.
\cite{DZW}).
\end{proof}
\begin{corollary}\label{C34}
In a solid weak BCC-algebra $X$ for all $a\in I(X)$ and $x\in
B(a)$ we have \ $x*(x*a)=a$.
\end{corollary}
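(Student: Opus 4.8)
The plan is to squeeze $x*(x*a)$ between $a$ and itself, using the two preceding results to supply the two opposite inequalities, and then to invoke antisymmetry of $\leqslant$. First I would observe that $a$ itself lies in the branch $B(a)$: indeed $a*a=0$ gives $a\leqslant a$, so $a\in B(a)$, and therefore $x$ and $a$ belong to the same branch. This is the small bookkeeping point that makes the earlier lemma and corollary applicable with second argument equal to $a$.

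Next I would apply Lemma \ref{L32} with $y$ replaced by $a$. Since $x$ and $a$ are in the same branch, it yields $x*(x*a)\leqslant a$, giving one of the two bounds. For the reverse bound I would invoke Corollary \ref{C33} (again specialized to $y=a$), which tells us $x*(x*a)\in B(a)$; by the very definition of the branch $B(a)=\{u\in X\mid a\leqslant u\}$ this means precisely $a\leqslant x*(x*a)$.

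Combining the two relations $x*(x*a)\leqslant a$ and $a\leqslant x*(x*a)$ and appealing to antisymmetry of the order — which is exactly axiom $(iv)$ read through the definition \eqref{1}, namely $u*v=v*u=0\Rightarrow u=v$ — I conclude $x*(x*a)=a$, as desired. I do not anticipate any real obstacle: both the upper and the lower bound are handed to us directly by Lemma \ref{L32} and Corollary \ref{C33}, and the only thing requiring a moment's attention is the verification that $a\in B(a)$, which is immediate from $a*a=0$. Everything else is a one-line antisymmetry argument.
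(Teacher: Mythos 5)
Your proof is correct, and since the paper states Corollary \ref{C34} without any proof, your argument is exactly the intended one-line deduction: Lemma \ref{L32} gives $x*(x*a)\leqslant a$, Corollary \ref{C33} (or, alternatively, the minimality of $a$ in $I(X)$) gives $a\leqslant x*(x*a)$, and axiom $(iv)$ finishes it. The only point needing care, that $a\in B(a)$ so the lemma applies with $y=a$, is one you address explicitly.
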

\begin{lemma}\label{L35}
In a solid weak BCC-algebra for elements belonging to the same
branch the following identity is satisfied:
\[
x*(x*(x*y))=x*y.
\]
\end{lemma}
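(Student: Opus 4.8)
The plan is to fix $x,y$ in a common branch $B(a)$, abbreviate $w:=x*y$ and $v:=x*(x*y)=x*w$, and then prove the asserted identity in the equivalent form $x*v=w$ (note $x*v=x*(x*(x*y))$ and $w=x*y$). I would obtain this equality from the antisymmetry axiom $(iv)$ by separately establishing the two inequalities $w\leqslant x*v$ and $x*v\leqslant w$.

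For the inequality $w\leqslant x*v$, I would invoke Lemma~\ref{L32}: since $x,y$ lie in the same branch, $v=x*(x*y)\leqslant y$. Feeding $v\leqslant y$ into the monotonicity implication \eqref{3} with $z:=x$ gives $x*y\leqslant x*v$, that is, $w\leqslant x*v$. For the reverse inequality I would compute $(x*v)*w$ directly. The crucial point is that, by Corollary~\ref{C33}, the element $v=x*(x*y)$ again lies in $B(a)$, so $x$ and $v$ belong to the \emph{same} branch; hence the solid identity \eqref{e-sol} may be applied to the pair $x,v$ with the free slot $z$ instantiated as $w$. This yields $(x*v)*w=(x*w)*v=v*v=0$, using $v=x*w$ and $(ii)$, so $x*v\leqslant w$. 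Antisymmetry $(iv)$ then forces $x*v=w$, which is precisely $x*(x*(x*y))=x*y$.

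The only genuinely delicate step is the branch bookkeeping. The solid identity is licensed \emph{only} on a same-branch pair, so the collapse $(x*v)*w=(x*w)*v$ would be unjustified without first knowing that $x$ and $v=x*(x*y)$ sit in the same branch. Securing this membership is exactly what Corollary~\ref{C33} provides, and it is the hinge of the whole argument; once it is in place, the remainder is a routine instantiation ($z:=w$) together with the definition $v=x*w$ and the idempotent law $v*v=0$.
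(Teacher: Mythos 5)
Your proof is correct and follows essentially the same route as the paper's: both establish $x*y\leqslant x*(x*(x*y))$ from Lemma~\ref{L32} together with \eqref{3}, and both obtain the reverse inequality by applying the solid identity to the same-branch pair $x$ and $x*(x*y)$ so that $(x*(x*(x*y)))*(x*y)=(x*(x*y))*(x*(x*y))=0$. Your explicit appeal to Corollary~\ref{C33} for the branch membership of $x*(x*y)$ is exactly the justification the paper gives inline ("comparable elements belong to the same branch"), so there is no substantive difference.
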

\begin{proof}
Let $x,y\in B(a)$ for some $a\in I(X)$. Then $x*(x*y)\leqslant y$,
by Lemma \ref{L32}. This implies $x*(x*y)\in B(a)$ because
comparable elements belong to the same branch. Moreover, by
\eqref{3}, we also have $x*y\leqslant x*(x*(x*y))$.

Since $X$ is solid
$$
(x*(x*(x*y)))*(x*y)=(x*(x*y))*(x*(x*y))=0.
$$
Thus $x*(x*(x*y))\leqslant x*y$. This completes the proof.
\end{proof}

\begin{theorem} \label{T36} For a solid weak BCC-algebra $X$ the following conditions are equivalent:
\begin{enumerate}
\item[$(a)$] \ $X$ is branchwise commutative,
\item[$(b)$] \ $x*y=x*(y*(y*x))$ \ for $x,y$ from the same branch,
\item[$(c)$] \ $x=y*(y*x)$ \ for \ $x\leqslant y$,
\item[$(d)$] \ $x*(x*y)=y*(y*(x*(x*y)))$ \ for $x,y$ from the same branch.
\end{enumerate}
\end{theorem}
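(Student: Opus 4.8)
The plan is to establish the four equivalences by the cycle $(a)\Rightarrow(b)\Rightarrow(c)\Rightarrow(a)$ together with the separate equivalence $(c)\Leftrightarrow(d)$. All of the ``forward'' arrows are short consequences of Lemmas~\ref{L32} and \ref{L35} and the antisymmetry axiom $(iv)$; the single genuinely substantial arrow is $(c)\Rightarrow(a)$, and the crux hidden inside it is an order fact companion to Lemma~\ref{L32}.

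For $(a)\Rightarrow(b)$ I would substitute the commutativity law $y*(y*x)=x*(x*y)$ into the right-hand side of $(b)$, turning it into $x*(x*(x*y))$, which collapses to $x*y$ by Lemma~\ref{L35}. For $(b)\Rightarrow(c)$, assume $x\leqslant y$; then $x*y=0$, so $(b)$ gives $x\leqslant y*(y*x)$, while Lemma~\ref{L32} gives $y*(y*x)\leqslant x$, and $(iv)$ forces $x=y*(y*x)$. The equivalence $(c)\Leftrightarrow(d)$ is equally cheap: writing $u=x*(x*y)$ we have $u\leqslant y$ by Lemma~\ref{L32}, so applying $(c)$ to the comparable pair $u\leqslant y$ yields exactly $(d)$; conversely, specialising $(d)$ to a pair with $x\leqslant y$ replaces $x*(x*y)$ by $x*0=x$ and reads off $(c)$.

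The main obstacle is $(c)\Rightarrow(a)$, and within it the key preliminary fact $x*(x*y)\leqslant x$ for $x,y$ in one branch. This does \emph{not} follow from the naive solidity rearrangement, since in $(x*(x*y))*x$ the first two factors $x$ and $x*y$ lie in different branches ($B(a)$ and $B(0)$), so the same-branch hypothesis fails; overcoming this is the real content of the theorem. The device I would use is to apply left solidity with the two \emph{equal} first factors, which trivially satisfy the same-branch condition: taking the solidity identity with both leading entries equal to $x$ and third entry $x*y$ gives $(x*x)*(x*y)=(x*(x*y))*x$, hence $(x*(x*y))*x=0*(x*y)$; and since $x*y\in B(0)$ we have $0*(x*y)=0$, so $x*(x*y)\leqslant x$.

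With both $x*(x*y)\leqslant y$ and $x*(x*y)\leqslant x$ available, $(c)\Rightarrow(a)$ follows by a two-sided inequality. Putting $u=x*(x*y)$ and $v=y*(y*x)$ (both in $B(a)$ by Corollary~\ref{C33}), from $u\leqslant x$ and two applications of \eqref{3} with $z=y$ I obtain $y*(y*u)\leqslant y*(y*x)=v$, and since $(c)$ gives $u=y*(y*u)$ this yields $u\leqslant v$; the symmetric computation with $x$ and $y$ interchanged (using $v\leqslant y$) gives $v\leqslant u$, so $u=v$ by $(iv)$, which is precisely $(a)$. I expect the verification of $x*(x*y)\leqslant x$ under mere left solidity to be the only delicate point; everything else is bookkeeping with \eqref{2}, \eqref{3} and Corollary~\ref{C33} to keep all auxiliary elements inside the branch $B(a)$.
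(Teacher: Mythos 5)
Your proposal is correct, but it takes a genuinely different route from the paper. The paper closes the single cycle $(a)\Rightarrow(b)\Rightarrow(c)\Rightarrow(d)\Rightarrow(a)$ and concentrates all the difficulty in $(d)\Rightarrow(a)$, which it settles by a long chain of displayed manipulations of terms such as $(y*(y*(x*(x*y))))*(y*(y*x))$. Your decomposition is $(a)\Rightarrow(b)\Rightarrow(c)\Rightarrow(a)$ together with $(c)\Leftrightarrow(d)$, with the difficulty concentrated in $(c)\Rightarrow(a)$. Two of your shortcuts deserve comment. First, your $(a)\Rightarrow(b)$ is a one-line substitution of $y*(y*x)=x*(x*y)$ followed by Lemma~\ref{L35}, whereas the paper proves the two inequalities $x*(y*(y*x))\leqslant x*y$ and $x*y\leqslant x*(y*(y*x))$ separately, the second by a lengthy computation. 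Second, and more substantially, your auxiliary fact $x*(x*y)\leqslant x$ is obtained by exactly the device the paper uses only inline: apply left solidity with the two \emph{equal} leading entries $x,x$ (trivially in one branch) and third entry $x*y$ to get $(x*(x*y))*x=(x*x)*(x*y)=0*(x*y)=0$, using $x*y\in B(0)$. You correctly identify that the naive rearrangement of $(x*(x*y))*x$ is blocked because $x$ and $x*y$ lie in different branches, and this workaround is sound. With $x*(x*y)\leqslant x$ and $x*(x*y)\leqslant y$ in hand, your finish of $(c)\Rightarrow(a)$ by two applications of \eqref{3} plus $(c)$ and antisymmetry $(iv)$ is correct and considerably leaner than the paper's $(d)\Rightarrow(a)$. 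Your $(b)\Rightarrow(c)$ and $(c)\Leftrightarrow(d)$ steps coincide essentially with the paper's. Both arguments are valid; yours is more modular and shorter, while the paper's version illustrates the longer branch-tracking manipulations it advertises in its conclusion.
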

\begin{proof} $(a)\Longrightarrow (b)$ \ Let $x,y\in B(a)$ for some $a\in I(X)$. Then, by Corollary \ref{C33},
elements $y*(y*x)$ and $x*(x*y)$ are in
$B(a)$. Thus
$$
(x*(y*(y*x)))*(x*y)=(x*(x*y))*(y*(y*x))=0.
$$
Hence $x*(y*(y*x))\leqslant x*y$.

Since in view of \eqref{com}
\begin{equation}\label{7}
x*(x*(x*(x*y)))=(x*(x*y))*((x*(x*y))*x),
\end{equation}
we also have
\[
\arraycolsep=.5mm\begin{array}{rl}
(x*y)*(x*(y*(y*x)))&=(x*y)*(x*(x*(x*y)))\\[5pt]
&=(x*(x*(x*(x*y))))*y\\[3pt]
&\stackrel{(7)}{=}\{(x*(x*y))*((x*(x*y))*x)\}*y\\[5pt]
&=\{(x*(x*y))*((x*x)*(x*y))\}*y\\[5pt]
&=\{(x*(x*y))*(0*(x*y))\}*y\\[5pt]
&=\{(x*(x*y))*0\}*y\\[5pt]
&=(x*(x*y))*y=(x*y)*(x*y)=0,
\end{array}
\]
because $x*y\in B(0)$. Thus $x*y\leqslant x*(y*(y*x))$. This
together with the previous inequality gives $(b)$.

\medskip

$(b)\Longrightarrow (c)$ \  For $x\leqslant y$, from $(b)$, we
obtain $0=x*(y*(y*x))$. On the other hand, for all $x,y$ from the
same branch we have $(y*(y*x))*x=(y*x)*(y*x)=0$. Thus
$0=x*(y*(y*x))=(y*(y*x))*x$, which, by $(iv)$, implies $(c)$.

\medskip

$(c)\Longrightarrow (d)$ \  Let $x,y\in B(a)$ for some $a\in
I(G)$. Then $(x*(x*y))*y=0$, i.e., $x*(x*y)\leqslant y$. This, by
$(c)$, implies $(d)$.

\medskip

$(d)\Longrightarrow (a)$ \  If $x$ and $y$ are in the same branch
$B(a)$ and $(d)$ is satisfied, then we have
\begin{equation}\label{6}
(x*(x*y))*(y*(y*x))=(y*(y*(x*(x*y))))*(y*(y*x)).
\end{equation}
But $y*(y*x)\in B(a)$, by Lemma \ref{L32}, hence
\begin{equation}\label{7a}
(y*(y*(x*(x*y))))*(y*(y*x))=(y*(y*(y*x)))*(y*(x*(x*y))).
\end{equation}
Moreover, $y*(y*x)\in B(a)$ implies also
\[
0=(y*(y*x))*(y*(y*x))=(y*(y*(y*x)))*(y*x),
\]
which means that $y*(y*(y*x))\leqslant y*x$. Multiplying this
inequality by $y*(x*(x*y))$ and using \eqref{2} we obtain
\begin{equation}\label{8}
(y*(y*(y*x)))*(y*(x*(x*y)))\leqslant (y*x)*(y*(x*(x*y))).
\end{equation}
This together with \eqref{6}, \eqref{7a} and \eqref{8} proves that
\[
(x*(x*y))*(y*(y*x))\leqslant (y*x)*(y*(x*(x*y))).
\]
Since $x$ and $y$ are in the same branch, in view of $(d)$, we
obtain
\[
(y*x)*(y*(x*(x*y)))=(y*(y*(x*(x*y))))*x=(x*(x*y))*x=(x*x)*(x*y)=
0*(x*y).
\]
So,
\[
(x*(x*y))*(y*(y*x))\leqslant 0*(x*y)=0.
\]
Thus
\[
(x*(x*y))*(y*(y*x))=0.
\]
This shows that a solid weak BCC-algebra satisfying $(d)$ is
branchwise commutative.
\end{proof}

\begin{theorem} \label{T37} A solid weak BCC-algebra $X$ is branchwise commutative if and only
if
\begin{enumerate}
\item[$(a)$]  each branch of $X$ is a semilattice with respect to the operation
$\wedge$ defined by $x\wedge y=y*(y*x)$, or equivalently,
\item[$(b)$]  $A(x)\cap A(y)=A(x\wedge y)$ for all $x,y$ belonging to the same
branch.
\end{enumerate}
\end{theorem}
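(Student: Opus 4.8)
The plan is to show that, under branchwise commutativity, the term $y*(y*x)$ computes the greatest lower bound of $x$ and $y$ inside their common branch; everything else then follows from standard order-theoretic bookkeeping. First I would fix $x,y\in B(a)$ and set $x\wedge y=y*(y*x)$. By Corollary \ref{C33} this element again lies in $B(a)$, so $\wedge$ is a genuine binary operation on each branch. That $x\wedge y$ is a lower bound is quick: Lemma \ref{L32}, with the roles of $x$ and $y$ interchanged, gives $y*(y*x)\leqslant x$, while $x,y\in B(a)$ forces $y*x\in B(0)$, i.e. $0\leqslant y*x$, whence \eqref{3} yields $y*(y*x)\leqslant y*0=y$.

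The main step is to prove that $x\wedge y$ is the \emph{greatest} lower bound. Suppose $z\leqslant x$ and $z\leqslant y$; such a $z$ automatically lies in $B(a)$, being comparable to $x$. From $z\leqslant x$ and \eqref{3} I get $y*x\leqslant y*z$, and a second application of \eqref{3} gives $y*(y*z)\leqslant y*(y*x)=x\wedge y$. On the other hand, $z\leqslant y$ together with condition $(c)$ of Theorem \ref{T36} (available because $X$ is branchwise commutative) gives $z=y*(y*z)$. Combining, $z=y*(y*z)\leqslant x\wedge y$, so $x\wedge y=\inf\{x,y\}$ in $B(a)$.

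Once this infimum identity is in hand, both $(a)$ and $(b)$ are immediate. Since every pair of elements of a branch now possesses an infimum, the branch is a meet-semilattice and its meet is precisely $\wedge$, which gives $(a)$; and because the common lower bounds of $x$ and $y$ are exactly the lower bounds of their infimum, we get $A(x)\cap A(y)=A(x\wedge y)$, which is $(b)$. For the converses I would argue directly. If $(a)$ holds then $\wedge$ is commutative, so $y*(y*x)=x\wedge y=y\wedge x=x*(x*y)$, which is branchwise commutativity. If $(b)$ holds, then the left-hand side $A(x)\cap A(y)$ is symmetric in $x$ and $y$, so $A\bigl(y*(y*x)\bigr)=A\bigl(x*(x*y)\bigr)$; since $u\in A(u)$ for every $u$, the antisymmetry axiom $(iv)$ forces $y*(y*x)=x*(x*y)$, again branchwise commutativity.

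The only delicate point is the greatest-lower-bound step, and its subtlety is concentrated entirely in the use of condition $(c)$ of Theorem \ref{T36}: without branchwise commutativity one obtains from Lemma \ref{L32} merely $y*(y*z)\leqslant z$ rather than equality for $z\leqslant y$, and then the chain $z=y*(y*z)\leqslant x\wedge y$ collapses, so one can no longer conclude that $z$ sits below $x\wedge y$. Everything else reduces to the monotonicity law \eqref{3} and the branch-closure Corollary \ref{C33}, so I expect no further obstacles.
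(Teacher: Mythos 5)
Your proof is correct, and its overall architecture --- show that $\wedge$ computes the greatest lower bound inside each branch, read off $(a)$ and $(b)$, and recover commutativity from the symmetry of the meet --- matches the paper's. The genuine divergence is in the verification that $x\wedge y$ is the \emph{greatest} lower bound. The paper substitutes $z=q*(q*z)$ and computes $z*(p\wedge q)=(q*(q*z))*(q*(q*p))=(q*(q*(q*p)))*(q*z)=(q*p)*(q*z)=0$, which requires an application of solidity (legitimate because $q$ and $q*(q*p)=p\wedge q$ lie in the same branch) together with the reduction $q*(q*(q*p))=q*p$ from Lemma \ref{L35}. You instead invoke Theorem \ref{T36}$(c)$ to get $z=y*(y*z)$ and then apply the antitonicity \eqref{3} twice to conclude $z=y*(y*z)\leqslant y*(y*x)=x\wedge y$; this bypasses both the solidity swap and Lemma \ref{L35}, at the cost of leaning on the full equivalence of Theorem \ref{T36} where the paper only uses the special case it rederives on the spot as its implication \eqref{10}. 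Your route is shorter and avoids exactly the branch-membership bookkeeping that makes the paper's swap step delicate, while the paper's computation stays self-contained within the lemmas of Section 3. Your handling of the converses is essentially the paper's for $(b)$ (symmetry of $A(x)\cap A(y)$ plus axiom $(iv)$), and your direct argument that $(a)$ already forces $y*(y*x)=x*(x*y)$ is a small simplification, since the paper only closes the cycle through $(b)$.
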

\begin{proof}
Let $X$ be a branchwise commutative solid weak BCC-algebra. Then,
by Lemma \ref{L32}, for $x,y\in B(a)$, $a\in I(X)$, we have
$y\wedge x=x\wedge y=x*(x*y)\leqslant y$. Hence $x\wedge y\in
B(a)$. Moreover, for any $p\leqslant q$ we have
$p=p*0=p*(p*q)=q\wedge p=p\wedge q$. Thus
\begin{equation}\label{10}
p\leqslant q\,\Longrightarrow\,p=p\wedge q .
\end{equation}

We prove that $p\wedge q$ is the greatest lower bound for any
$p,q\in B(a)$. For this consider an arbitrary element $z\in X$
such that $z\leqslant p$ and $z\leqslant q$. Then obviously $z\in
B(a)$ and
$$
z*(p\wedge q)=z*(q*(q*p))=(z \wedge q)*(q*(q*p))
$$
by \eqref{10}. Thus
$$
z*(p\wedge q)=(q*(q*z))*(q*(q*p))=(q*(q*(q*p)))*(q*z)
$$
because $q*(q*p)=p\wedge q\in B(a)$ for $p,q\in B(a)$. The last,
by Lemma \ref{L35}, is equal to $(q*p)*(q*z)$. Hence $z*(p\wedge
q)=(q*p)*(q*z)=0$ since $q*p\leqslant q*z$ for $z\leqslant p$. So,
$z\leqslant p\wedge q$. This means that $p\wedge q$ is the
greatest lower bound of $p$ and $q$. Hence the greatest lower
bound of each $p,q\in B(a)$ is in B(a), so $B(a)$ is a semilattice
with respect to $\wedge$. This proves $(a)$.

\medskip

Now, if $(a)$ is satisfied and $z\in A(x)\cap B(y)$ for some
$x,y\in B(a)$, $a\in I(X)$. Then $z\leqslant x$ and $z\leqslant
y$. Thus $z\in B(a)$ and $z\leqslant x\wedge y$, since $x\wedge
y\in B(a)$ is the greatest lower bound of $x$ and $y$. Hence $z\in
A(x\wedge y)$. Consequently, $A(x)\cap A(y)\subseteq A(x\wedge
y)$.

On the other hand, for any $z\in A(x\wedge y)$, by Lemma
\ref{L32}, we have $z\leqslant x\wedge y=y*(y*x)\leqslant x$, .
Hence $z\in A(x)$. Since
$$
(x\wedge y)*y=(y*(y*x))*y=(y*y)*(y*x)=0*(y*x)=0,
$$
we have $x\wedge y\leqslant y$ which implies $z\in A(y)$. So,
$z\in A(x)\cap A(y)$. Thus $A(x\wedge y)\subseteq A(x)\cap A(y)$
and hence $A(x\wedge y)=A(x)\cap A(y)$. This proves $(b)$.

Finally, if $(b)$ is satisfied, then
$$
A(x\wedge y)=A(x)\cap A(y)=A(y)\cap A(x)=A(y\wedge x).
$$
Hence $x\wedge y\in A(y\wedge x)$ and $y\wedge x\in A(x\wedge y)$.
Therefore $y\wedge x\leqslant x\wedge y$ and $x\wedge y\leqslant
y\wedge x$, which implies $x\wedge y=y\wedge x$. So, $X$ is
branchwise commutative.
\end{proof}

\begin{theorem}\label{T38}
A solid branchwise implicative weak BCC-algebra is branchwise
commutative.
\end{theorem}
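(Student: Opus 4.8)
The plan is to reduce the claim to Theorem~\ref{T36}, whose condition $(c)$ characterizes branchwise commutativity by the single implication $x\leqslant y\Rightarrow x=y*(y*x)$. Thus it suffices to fix two comparable elements $x\leqslant y$ (so that $x,y$ lie in a common branch $B(a)$ and $x*y=0$) and to establish the equality $x=y*(y*x)$; branchwise commutativity then follows at once from the implication $(c)\Rightarrow(a)$ already proved.

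To obtain this equality I would derive $x\leqslant y*(y*x)$ and $y*(y*x)\leqslant x$ separately and then appeal to the antisymmetry axiom $(iv)$. The second inequality is immediate: applying Lemma~\ref{L32} with the roles of $x$ and $y$ interchanged gives $y*(y*x)\leqslant x$. The first inequality is the substantive one. Here I would use branchwise implicativity, which for the same-branch pair $x,y$ yields $x*(y*x)=x$, together with the instance of axiom $(i)$ obtained by setting its three variables to $x$, $y*x$, and $y$, namely
\[
\big((x*(y*x))*(y*(y*x))\big)*(x*y)=0.
\]
Since $x*y=0$ and $q*0=q$ by $(iii)$, this collapses to $(x*(y*x))*(y*(y*x))=0$, and replacing $x*(y*x)$ by $x$ gives exactly $x*(y*(y*x))=0$, that is, $x\leqslant y*(y*x)$.

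The main obstacle is strategic rather than computational: recognizing that it is far cleaner to verify condition $(c)$ of Theorem~\ref{T36} than to attack the defining identity $x*(x*y)=y*(y*x)$ head-on, and then spotting the correct instantiation of axiom $(i)$ that lets implicativity discharge the nontrivial inequality. It is worth noting that solidity is not invoked directly in this argument; it enters only through Lemma~\ref{L32} and through the equivalences of Theorem~\ref{T36}, both of which already presuppose it.
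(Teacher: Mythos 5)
Your proof is correct, but it takes a genuinely different route from the paper's. The paper also funnels the claim through Theorem~\ref{T36}, but via condition $(d)$ rather than $(c)$: for $x,y$ in a common branch it applies branchwise implicativity to the \emph{derived} pair $(x*(x*y),\,y)$ (legitimate because $x*(x*y)\leqslant y$ by Lemma~\ref{L32}), obtaining $(x*(x*y))*(y*(x*(x*y)))=x*(x*y)$, and then sandwiches $x*(x*y)\leqslant y*(y*(x*(x*y)))\leqslant x*(x*y)$ using the monotonicity law \eqref{2} and a second application of Lemma~\ref{L32}. You instead target condition $(c)$, which concerns only comparable pairs $x\leqslant y$, and replace the sandwich by a direct instantiation of axiom $(i)$ at $(x,\,y*x,\,y)$, which together with $x*y=0$ and axiom $(iii)$ collapses to $(x*(y*x))*(y*(y*x))=0$; the implicativity identity $x*(y*x)=x$, applied to the original pair, then yields $x\leqslant y*(y*x)$, and Lemma~\ref{L32} plus antisymmetry finish the job. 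All the steps check out, including the substitution into $(i)$. Both proofs are of comparable length and both confine solidity to Lemma~\ref{L32} and the equivalences of Theorem~\ref{T36}; what yours buys is that implicativity is invoked only for the given pair $(x,y)$ and that the single hypothesis $x*y=0$ does most of the computational work, whereas the paper's version is closer to the mechanical inequality-chasing it advertises elsewhere and verifies the formally stronger-looking identity $(d)$ for arbitrary same-branch pairs.
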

\begin{proof}
Let $x,\,y\in B(a)$ for some $a\in I(X)$. Then $x*(x*y)\leqslant
y$, by Lemma \ref{L32}. Hence $x,\,y,\,x*(x*y)\in B(a)$. Thus
\begin{equation}\label{9}
(x*(x*y))*(y*(x*(x*y)))=x*(x*y)
\end{equation}
since $X$ is branchwise implicative. Moreover, from
$x*(x*y)\leqslant y$ and \eqref{2}, it follows
$$
(x*(x*y))*(y*(x*(x*y)))\leqslant y*(y*(x*(x*y))).
$$
Hence, by \eqref{9} and Lemma \ref{L32}, we obtain
$$
x*(x*y)\leqslant y*(y*(x*(x*y)))\leqslant x*(x*y).
$$
So, $y*(y*(x*(x*y)))=x*(x*y)$ for all $x,y\in B(a)$. Theorem
\ref{T36} completes the proof.
\end{proof}

\begin{theorem}\label{T39}
A solid branchwise implicative weak BCC-algebra is branchwise
positive implicative if and only if it is a commutative
BCK-algebra.
\end{theorem}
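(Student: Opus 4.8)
The plan is to show that both implications reduce to the single-branch case, where the \emph{branchwise} hypotheses become unrestricted identities and classical BCK-algebra reasoning applies.

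For the direction ``branchwise positive implicative $\Longrightarrow$ commutative BCK-algebra'', the key observation I would exploit is that every element lies in the same branch as itself, so the branchwise positive implicative law may be applied with $y=x$. This gives $0*x=(x*x)*x=x*x=0$ for every $x\in X$; that is, $X$ satisfies $(v)$ and so is a BCC-algebra. A BCC-algebra has the single branch $B(0)=X$, so solidity forces \eqref{e-sol} to hold for \emph{all} $x,y,z$, making $X$ a BCI-algebra and hence a BCK-algebra. It then remains to note that, by Theorem \ref{T38}, $X$ is branchwise commutative, which in a one-branch algebra is ordinary commutativity. Thus $X$ is a commutative BCK-algebra.

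For the converse I would assume $X$ is a commutative BCK-algebra. Then it has a single branch, \eqref{e-sol} holds everywhere, branchwise implicativity is exactly \eqref{imp}, and the goal \eqref{posimp} need only be checked unrestrictedly. One inequality is free: by \eqref{e-sol} together with $0*y=0$ we get $((x*y)*y)*(x*y)=((x*y)*(x*y))*y=0$, so $(x*y)*y\leqslant x*y$. For the reverse inequality I would set $u=x*y$ and use commutativity \eqref{com} to write $u*(u*y)=y*(y*u)$; since \eqref{imp} gives $y*u=y*(x*y)=y$, this equals $y*y=0$, whence $x*y\leqslant(x*y)*y$. Antisymmetry $(iv)$ then yields \eqref{posimp}, i.e. branchwise positive implicativity.

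The substantive step is the forward direction: recognizing that the seemingly weak branchwise law, tested at $y=x$, already annihilates $0*x$ and collapses $X$ into a single branch, after which solidity silently upgrades the structure to a genuine BCK-algebra. The converse is essentially the classical fact that a commutative implicative BCK-algebra is positive implicative; the only care required is bookkeeping, making sure each branchwise hypothesis is correctly read as a global identity once a unique branch is present.
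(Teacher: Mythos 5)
Your proposal is correct and follows essentially the same route as the paper: the forward direction sets $y=x$ in the branchwise positive implicative law to get $0*x=0$, collapses $X$ to the single branch $B(0)$ (hence a BCK-algebra), and invokes Theorem \ref{T38} for commutativity; the converse establishes $(x*y)*y=x*y$ by the same two inequalities, using \eqref{e-sol} with $0*y=0$ for one and commutativity plus $y*(x*y)=y$ for the other, finishing with $(iv)$. No gaps.
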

\begin{proof}
By Theorem \ref{T38}, a solid branchwise implicative weak
BCC-algebra is branchwise commutative. If it is branchwise
positive implicative, then for elements from the same branch we
have $x*y=(x*y)*y$, which for $x=y$ implies $0=0*y$. This means
that this weak BCC-algebra coincides with the branch $B(0)$.
Hence, it is a commutative BCK-algebra.

Conversely, if a commutative BCK-algebra is implicative, then
$$
((x*y)*y)*(x*y)=((x*y)*(x*y))*y=0*y=0
$$
and
$$
(x*y)*((x*y)*y)=y*(y*(x*y))=y*y=0,
$$
by commutativity and implicativity. Hence
$$
((x*y)*y)*(x*y)=(x*y)*((x*y)*y)=0,
$$
which implies $(x*y)*y=x*y$. So, $X$ is positive implicative.
\end{proof}

As a consequence of the above results we obtain well-known
characterization of implicative BCK-algebras presented below.

\begin{theorem}
A BCK-algebra is implicative if and only if it is both commutative
and positive implicative.
\end{theorem}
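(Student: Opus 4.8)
The plan is to reduce everything to the results already established by observing that a BCK-algebra is a particularly simple solid weak BCC-algebra. Since a BCK-algebra satisfies \eqref{e-sol}'s defining companion $0*x=0$, its only minimal element is $0$, so it has a single branch $B(0)=X$; consequently it is (super)solid and each of the adjectives \emph{implicative}, \emph{commutative} and \emph{positive implicative} coincides with its branchwise counterpart. This is exactly what lets me feed the algebra into Theorems \ref{T38} and \ref{T39}.

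For the forward implication I would argue as follows. Assume $X$ is implicative, hence branchwise implicative. Theorem \ref{T38} immediately yields that $X$ is branchwise commutative, i.e.\ commutative. Being now both commutative and implicative, $X$ falls under the hypotheses of the \emph{converse} half of Theorem \ref{T39}, whose computation establishes $(x*y)*y=x*y$; thus $X$ is positive implicative. So implicativity alone forces both of the other two properties.

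For the backward implication I would give a short direct computation, since Theorem \ref{T39} only runs from implicativity. Assume $X$ is commutative and positive implicative; I must prove $x*(y*x)=x$. One inequality is free: because a BCK-algebra is a BCI-algebra and hence satisfies \eqref{e-sol}, we get $(x*(y*x))*x=(x*x)*(y*x)=0*(y*x)=0$, that is, $x*(y*x)\leqslant x$. For the reverse inequality I compute $x*(x*(y*x))$, applying commutativity \eqref{com} to the pair $x,\,y*x$ to rewrite it as $(y*x)*((y*x)*x)$, then positive implicativity \eqref{posimp} to replace $(y*x)*x$ by $y*x$; this collapses the expression to $(y*x)*(y*x)=0$, so $x\leqslant x*(y*x)$. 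Antisymmetry $(iv)$ then gives $x*(y*x)=x$, i.e.\ $X$ is implicative.

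I expect no serious obstacle. The forward direction is pure bookkeeping once one notices that the single-branch hypothesis identifies the branchwise properties with the global ones. The only point requiring any care is the backward direction, where the trick is to see that just one of the two inequalities needs work and that commutativity is precisely what converts the iterated difference $x*(x*(y*x))$ into a form on which positive implicativity can be applied.
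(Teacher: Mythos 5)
Your proof is correct and follows essentially the same route as the paper: your backward direction is the paper's computation verbatim (one inequality via \eqref{e-sol}, the other via commutativity followed by positive implicativity, then antisymmetry), and your forward direction likewise rests on Theorem \ref{T38} for commutativity. The only, harmless, difference is that you obtain positive implicativity by citing the converse half of Theorem \ref{T39}, whereas the paper gets it from the one-line identity $x*y=(x*y)*(y*(x*y))=(x*y)*y$ using implicativity alone.
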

\begin{proof}
Indeed, in an implicative BCK-algebra $X$ we have
$$
x*y=(x*y)*(y*(x*y))=(x*y)*y
$$
for all $x,y\in X$. Hence, $X$ is positive implicative. By Theorem
\ref{T38} it also is commutative.

On the other hand, in any commutative and positive implicative
BCK-algebra $X$ we have
$$
x*(x*(y*x))=(y*x)*((y*x)*x)=(y*x)*(y*x)=0
$$
and
$$
(x*(y*x))*x=(x*x)*(y*x)=0*(y*x)=0
$$
for all $x,y\in X$. This implies $x*(y*x)=x$. So, $X$ is
implicative.
\end{proof}

\section{Positive implicative weak BCC-algebras}

Positive implicative BCK-algebras are defined as BCK-algebras
satisfying the identity \eqref{posimp}. By putting in
\eqref{posimp} $x=y$ we can see that in a weak BCC-algebra $X$ we
have $0*x=0$ for all $x\in X$. This means that $X=B(0)$. Thus a
weak BCC-algebra satisfying this identity has only one branch and
it is a BCC-algebra. If it is solid, then it is a BCK-algebra.
Hence (branchwise) positive implicative weak BCC-algebras should
be defined in another way. Positive implicative BCI-algebras are
defined as BCI-algebras satisfying the identity
$x*y=((x*y)*y)*(0*y)$. (Equivalent conditions one can find in the
book \cite{Huang}). Such defined positive implicative BCI-algebras
have properties similar to properties of positive implicative
BCK-algebras. Unfortunately, the proofs of these properties are
based on the identity \eqref{e-sol}. Only a small part of these
results can be transferred to solid and supersolid weak
BCC-algebras. In connection with this fact we introduce a new
class of weak BCC-algebras called {\it $\varphi$-implicative} weak
BCC-algebras.

For this we consider the self map $\varphi(x)=0*x$. This map was
formally introduced in \cite{DT} for BCH-algebras, but earlier was
used for \cite{Du'86} and \cite{Du'88} to investigations of some
classes of BCI-algebras. Later, in \cite{DZW}, it was used to
characterization of some ideals of weak BCC-algebras. Note that in
weak BCC-algebras $\varphi^2$ is an endomorphism ($\varphi$ is not
an endomorphism, in general). Moreover, $\varphi^2(x)\leqslant x$
for every $x\in X$ (see for example \cite{DZW}).

\begin{definition}
We say that a weak BCC-algebra $X$ is {\it $\varphi$-implicative}
if it satisfies the identity
$$
x*y=(x*y)*(y*\varphi^2(y)).
$$
If this identity is valid for elements belonging to the same
branch then such weak BCC-algebra is called {\it branchwise
$\varphi$-implicative}.
\end{definition}

Obviously, a BCK-algebra is positive implicative if and only if it
is $\varphi$-implicative. For BCI-algebras it is not true.

\begin{example} Consider the following BCI-algebra (cf.
\cite{Huang}):

\begin{center}$
\begin{array}{c|ccccc} *&0&1&2&3&4\\ \hline\rule{0pt}{11pt}
0&0&0&4&4&2\\
1&1&0&4&4&2\\
2&2&2&0&0&4\\
3&3&2&1&0&4\\
4&4&4&2&2&0
\end{array}$\\[3mm]

Table\;5.
\end{center}
This BCI-algebra has three branches: $B(0)$, $B(2)$ and $B(4)$.
Since $3*2\ne ((3*2)*2)*\varphi(2)$, it is not (branchwise)
positive implicative, but it is $\varphi$-implicative. Indeed, the
case $x\leqslant y$ is obvious because $\varphi^2(y)\leqslant y$
implies $y*\varphi^2(y)\in B(0)$. Similarly the case $y\in I(X)$.
The remaining five cases can be checked by standard simple
calculations. \hfill$\Box{}$
\end{example}

\begin{example}
Using the same method as in the previous example we can verify
that a solid weak BCC-algebra defined by Table 4 is branchwise
$\varphi$-implicative. It is not $\varphi$-implicative since
$5*3\ne (5*3)*(3*\varphi^2(3))$. \hfill$\Box{}$
\end{example}

One of the classical results is: {\it A BCK-algebra $($BCI-algebra
too$)$ is implicative if and only if it is commutative and
positive implicative} (cf. \cite{Huang}). For BCI-algebras this
result can be extended to the branchwise version but for weak
BCC-algebras this version is not valid. For weak BCC-algebras it
will be valid in the branchwise $\varphi$-implicative version. For
details see \cite{DT2}.

\section{Conclusions}
Solid BCC-algebras are difficult to study due to the fact that
equality $(x*y)*z=(x*z)*y$ may not be true for elements belonging
to different branches. However, using the calculation method
presented above we can obtain results very similar to those
obtained for BCI-algebras. In some cases, like for example in the
first part of proof of the Theorem \ref{T36}, our method comes
down to almost mechanical transformations. In other cases we have
to continue doing the transformations and choose elements so that
the result is still in the same branch. This is troublesome. In
exchange the results are true for the wider class of algebras.

Further investigations of solid weak BCC-algebras are continued in
\cite{DT2}, \cite{Thom} and \cite{ThZh}. In the first paper a
description of the $\varphi$-implicative algebras is given. In the
second paper results related to the $f$-derivations of weak
BCC-algebras are proved. It turns out that they are very similar
to those proved in \cite{ZL} for BCI-algebras. Results presented
in the third paper show that verification of various properties of
ideals can be reduced to verification of these properties in some
branches which is very important for computer verification.


\begin{thebibliography}{30}
\bibitem{Bun}  W.M. Bunder, {\it BCK and related algebras and
their corresponding logics}, J. Non-classical Logic 7 (1983), pp.
$15-24$.

\bibitem{Ch'92} M.A. Chaudhry, {\it Branchwise commutative BCI-algebras}, Math.
Japonica 37 (1992), pp. $163-170$.

\bibitem{Ch'01} M.A. Chaudhry, {\it On two classes of BCI-algebras}, Math.
Japonica 53 (2001), pp. $269-278$.

\bibitem{Ch'02} M.A. Chaudhry, {\it On branchwise implicative BCI-algebras}, International J. Math. Math.
Sci. 29 (2002), pp. $417-425$.

\bibitem{Du'86} W.A. Dudek, {\it On some BCI-algebras with condition $(S)$}, Math.
Japonica 31 (1986), pp. $25-29$.

\bibitem{Du'88} W.A. Dudek, {\it On group-like BCI-algebras}, Demonstratio Math. 21 (1988),
pp. $369-376$.

\bibitem{Du'90} W.A. Dudek, {\it On BCC-algebras}, Logique et Analyse 129-130 (1990),
pp. $103-111$.

\bibitem{Du'92} W.A. Dudek, {\it On proper BCC-algebras}, Bull. Inst. Math. Acad.
Sinica 20 (1992), pp. $137-150$.

\bibitem{Du'96} W.A. Dudek, {\it Remarks on the axioms system for BCI-algebras}, Prace
Naukowe WSP w Czestochowie, ser. Matematyka 2 (1996), pp. $46-61$.

\bibitem{DT} W.A. Dudek and J. Thomys, {\it On decompositions of BCH-algebras},
Math. Japonica 35 (1990), pp. $1131-1138$.

\bibitem{DT2} W.A. Dudek and J. Thomys, {\it On some generalizations of
BCC-algebras}, in preparation.

\bibitem{DZW} W.A. Dudek, X.H. Zhang and Y.Q. Wang, {\it Ideals and atoms of
BZ-algebras}, Math. Slovaca 59 (2009), pp. $387-404$.

\bibitem{Huang} Y.S. Huang, {\it BCI-algebra}, Science Press, Beijing 2006.

\bibitem{Ior} A. Iorgulescu, {\it Algebras of logic as BCK algebras},
Acad. Econom. Studies, Bucharest, 2008.

%\bibitem{I'66} K. Is\'eki, {\it An algebra related with a propositional calculus}, Proc. Japan
%Acad. 42 (1966), pp. $26-29$.

%\bibitem{ITan'78} K. Is\'eki and K. Tanaka, {\it An introduction to the theory of
%BCK-algebras}, Math. Japonica 23 (1978), pp. $1-26$.

\bibitem{BB} B. Karamdin and S.A. Bhatti, {\it Decompositions of weak BCC-algebras}, Algebra Coll. (2011),
in print.

\bibitem{Ko'84} Y. Komori, {\it The class of BCC-algebras is not variety}, Math. Japonica
29 (1984), pp. $391-394$.

\bibitem{book} J. Meng and Y.B. Jun, {\it BCK-algebras}, Kyung Moon SA, Seoul, 1994.

\bibitem{Mun} D. Mundici, {\it MV-algebras are categorically
equivalent to bounded commutative BCK-algebras}, Math. Japonica 31
(1986), pp. $889-894$.

\bibitem{Ras} H. Rasiowa, {\it An algebraic approach to
non-classical logics}, North-holland, Amsterdam 1974.

\bibitem{Thom} J. Thomys, {\it $f$-derivations of weak
BCC-algebras}, Intern. J. Algebra 5 (2011), pp. $325-334$.

\bibitem{ThZh} J. Thomys and X.H. Zhang, {\it Weak BCC-algebras},
submitted.

\bibitem{Zad} L.A. Zadeh, {\it Toward a generalized theory of
uncertainty $(GTU)$-an outine}, Inform. Sci. 172 (2005), pp.
$1-40$.

\bibitem{ZL} J. Zhan and Y.L. Liu, {\it On $f$-derivations of
BCI-algebras}, Int. J. Math. Math. Sci. 11 (2005), pp.
$1675-1684$.

\bibitem{Zh'07} X.H. Zhang, {\it BIK$^+$-logic and non-commutative fuzzy
logics}, Fuzzy Systems Math. 21 (2007), pp. $31-36$.

\bibitem{ZWD} X.H. Zhang, Y.Q. Wang and W.A. Dudek, {\it T-ideals in BZ-algebras and T-type
BZ-algebras}, Indian J. Pure Appl. Math. 34 (2003), pp.
$1559-1570$.

\bibitem{BZ-alg} X.H. Zhang, Y.Q. Wang and B.Y. Shen, {\it BZ-algebras of
order $5$}, J. East China Univ. Sci. Technology 29 (2003), pp.
$50-53$.

\bibitem{Zh'95} X.H. Zhang and R. Ye, {\it BZ-algebras and groups},
J. Math. Phys. Sci. 29 (1995), pp. $223-233$.

\end{thebibliography}
\end{document}